\newcommand{\bP}{\mathbb{P}}
\newcommand{\sP}{\mathsf{P}}
\newcommand{\sH}{\mathsf{H}}
\newcommand{\widevec}[1]{\overrightarrow{#1}}
\DeclareMathOperator{\cO}{\mathcal{O}}
\DeclareMathOperator{\cM}{\mathcal{M}}
\newcommand{\cS}{\mathcal{S}}   
\DeclareMathOperator{\PGL}{PGL}
\DeclareMathOperator{\SL}{SL}
\DeclareMathOperator{\dep}{depth}
\newcommand{\bN}{\mathbb{N}}
\DeclareMathOperator{\hypRes}{hypRes}
\newcommand{\bD}{\mathbb{D}}
\newcommand{\bC}{\mathbb{C}}
\newcommand{\bL}{\mathbb{L}}
\DeclareMathOperator{\GCD}{GCD}
\newcommand{\bR}{\mathbb{R}}
\DeclareMathOperator{\ordRes}{ordRes}
\DeclareMathOperator{\Res}{Res}
\newcommand{\rd}{\mathrm{d}}
\DeclareMathOperator{\supp}{supp}
\theoremstyle{plain}
\newtheorem{theorem}{Theorem}[section]
\newtheorem{mainth}{Theorem}
\newtheorem{proposition}[theorem]{Proposition}
\theoremstyle{definition}
\newtheorem{definition}[theorem]{Definition} 
\newtheorem*{notation}{Notation}
\newtheorem*{acknowledgement}{Acknowledgement}
\theoremstyle{remark}
\newtheorem{fact}[theorem]{Fact}
\numberwithin{equation}{section}
\begin{document}

\title[intrinsic reductions and depths]{The intrinsic reductions and the intrinsic depths in non-archimedean dynamics}

\author{Y\^usuke Okuyama}
\address{Division of Mathematics, Kyoto Institute of Technology, Sakyo-ku, Kyoto 606-8585 JAPAN}
\email{okuyama@kit.ac.jp}
 
\keywords{Berkovich projective line,
intrinsic reduction,
intrinsic depth,
semistable reduction,
degeneration,
non-archimedean dynamics,
complex dynamics}

\subjclass[2020]{Primary 37P50; Secondary 12J25}

\date{\today}

\begin{abstract}
In this short paper, we aim at giving a more conceptual and simpler proof of
Rumely's moduli theoretic characterization of 
type II minimal locus of the resultant function 
$\ordRes_\phi$ 
on the Berkovich hyperbolic space
for a rational function $\phi$ on $\bP^1$
defined over an algebraically closed and complete field that is equipped with a non-trivial and
non-archimedean absolute value, 
and also aim at giving a much simpler and
more natural proof of 
a degenerating limit theorem,
in an improved form after DeMarco--Faber, 
for the family of
the unique maximal entropy measures on $\bP^1(\bC)$
associated to a meromorphic family of complex rational functions.
We introduce the intrinsic reduction of 
a non-archimedean rational function $\phi$
at each point in the Berkovich projective line and 
its directionwise intrinsic depths, which 
are suitable notions for the above aims and
defined in terms of the tree and analytic structures of the Berkovich projective line. 
Then we establish two theorems in non-archimedean dynamics, both of which
play key roles in the above aims.
\end{abstract}

\maketitle

\section{Introduction}
\label{sec:intro}

Let $K$ be an algebraically closed 
and complete field that is equipped with a non-trivial
and non-archimedean absolute value $|\cdot|$. 
The classical action of a non-constant
rational function $\phi(z)\in K(z)$ on 
the projective line $\bP^1=\bP^1(K)$ extends, as an analytic endomorphism,
to the Berkovich projective line $\sP^1=\sP^1_K$, which is a compact augmentation of $\bP^1$ 
and has both an analytic structure in Berkovich's sense and a profinite tree structure in Jonsson's sense, and this extended action of $\phi$
preserves both $\bP^1$ and
the Berkovich upper half space 
$\sH^1=\sH^1(K):=\sP^1\setminus\bP^1$; 
the extended action to $\sP^1_K$
of the projective transformation group 
$\PGL(2,K)$ of $\bP^1$ 
is transitive on $\sH^1$. 
For the details on the properties of $\sP^1$ (and Berkovich curves)
and non-archimedean dynamics on $\sP^1$,
we refer to \cite{BR10, BenedettoBook, FR09, Jonsson15}.
We denote the direction (or tangent) space of $\sP^1$
at a point $\xi\in\sP^1$ by $T_{\xi}\sP^1$, and also denote the direction
$\vec{v}\in T_{\xi}\sP^1$ pointing a point $\xi'\in\sP^1\setminus\{\xi\}$ 
by $\widevec{\xi\xi'}$, which is identified with the connected component
$U(\vec{v})$ of $\sP^1\setminus\{\xi\}$ containing $\xi'$. 
The analytic endomorphism $\phi$ of $\sP^1$ is $\deg\phi$ to $1$
taking into account the local degree 
$\deg_\xi(\phi)$ of $\phi$ 
at each point $\xi\in\sP^1$,
and induces
the pullback operator $\phi^*$ from the space of Radon measures on $\sP^1$
into itself so that the pullback 
under $\phi$ of the Dirac measure $\delta_\xi$ on $\sP^1$ at a point
$\xi\in\sP^1$ is the sum of the Dirac measures $\delta_{\xi'}$
on $\sP^1$, $\xi'\in\phi^{-1}(\xi)$,
taking into account the local degree of $\phi$ at each $\xi'$.

Let us introduce the following notions, which are
defined in terms of the tree 
and analytic structures of $\sP^1$.

\begin{definition}\label{th:pointwise}
The {\em intrinsic reduction} $\tilde{\phi}_\xi$
of $\phi$ at a point $\xi\in\sP^1$ is 
a selfmap of $T_\xi\sP^1$ defined as 
\begin{gather*}
 \tilde{\phi}_\xi\begin{cases}
		    :=\phi_{*,\xi} & \text{if }\phi(\xi)=\xi,\\
		    :\equiv\widevec{\xi(\phi(\xi))} & \text{otherwise},
		   \end{cases}
\end{gather*}
where $\phi_*=\phi_{*,\xi}:T_\xi\sP^1\to T_{\phi(\xi)}\sP^1$ is the 
(non-constant) tangent map
induced by $\phi$ at $\xi$, and its {\em intrinsic depth}
at each direction $\vec{v}\in T_\xi\sP^1$ is the non-negative 
integer defined as
\begin{gather}
 \dep_{\vec{v}}\tilde{\phi}_\xi:=(\phi^*\delta_{\xi})\bigl(U(\vec{v})\bigr)\in\{0,\ldots,\deg\phi\}.\label{eq:geomdepth}
\end{gather}
\end{definition}

The above notions are closely related to more algebraic
notions in non-archimedean dynamics;
when the point $\xi$ is the Gauss or canonical point $\xi_g$ in $\sH^1$,
the direction space $T_{\xi_g}\sP^1$ is identified with
$\bP^1(k)$ by the bijection $\widevec{\xi_g a}\leftrightarrow\hat{a}$,
and the intrinsic reduction $\tilde{\phi}_{\xi_g}$
is with the (algebraic) reduction $\tilde{\phi}(\zeta)\in k(\zeta)$ of $\phi$ modulo $K^{\circ\circ}$ (\cite{Juan03}, see also \cite[Corollary 9.27]{BR10}), where $K^{\circ\circ}:=\{z\in K:|z|<1\}$
is the unique maximal ideal of the ring $K^{\circ}:=\{z\in K:|z|\le 1\}$
of $K$-integers\footnote{Here we adopted the notations $K^{\circ},K^{\circ\circ}$, which seems as popular as the notations $\cO_K,\cM_K$, respectively.}$k=k_K:=K^\circ/K^{\circ\circ}$ is the residual field of $K$,
and the point $\hat{a}\in\bP^1(k)$ is the reduction of a point $a\in\bP^1(K)$ modulo $K^{\circ\circ}$. Moreover, $\deg_{\xi_g}(\phi)=\deg\tilde{\phi}$, and 
the intrinsic depth $\dep_{\vec{v}}\tilde{\phi}_{\xi_g}$
at $\vec{v}=\widevec{\xi_g a}$ equals the (algebraic) depth
$\dep_{\hat{a}}\hat{\phi}$ at $\hat{a}$   
of the coefficient reduction 
$\hat{\phi}\in\bP^{2(\deg\phi)+1}(k)$ of $\phi$ 
modulo $K^{\circ\circ}$
(see Section \ref{sec:slope} for the definitions).
When $\xi\in\sH^1$ 
(e.g.\ when $\xi=\xi_g$ as the above),
the notions of the intrinsic reduction of $\phi$ at $\xi$ 
and its intrinsic depths are also
related to the reduced version $\phi_\xi^*$ at $\xi$
of the above pullback operator $\phi^*$ for measures on $\sP^1$
(introduced in \cite[\S 4.3]{DF14}, and the definition is mentioned in Section \ref{sec:degenerating}),  
which is an operator from the space $M(\{\xi,\phi(\xi)\})$
to the space $M(\{\xi\})$ of measures on the factored spaces
$\sP^1/S(\{\xi,\phi(\xi)\}),\sP^1/S(\{\xi\})$
induced respectively by the partitions 
$S(\{\xi,\phi(\xi)\}),S(\{\xi\})$ of $\sP^1$,
setting the partition
\begin{gather*}
 S(\Gamma):=\{\{\xi\}:\xi\in\Gamma\}
\sqcup\{\text{connected components of }\sP^1\setminus\Gamma\}
\end{gather*} 
of $\sP^1$ for any finite subset $\Gamma$ in $\sP^1$ 
and equipping the factored space $\sP^1/S(\Gamma)$ with the discrete topology; 
for example, for any direction
$\vec{v}\in T_\xi\sP^1=S(\{\xi\})\setminus\{\{\xi\}\}$
at the point $\xi$,
\begin{gather}
\bigl(\phi_\xi^*\delta_{\{\xi\}}\bigr)(\{U(\vec{v})\})=\dep_{\vec{v}}{\tilde{\phi}_\xi},
\end{gather}
using the non-archimedean argument principle
\eqref{eq:FKN} 
(\cite[Proposition 3.10]{Faber13topologyI})
stated below.
\begin{notation}
 Above and below,
 let $\delta_U$ denote the Dirac measure on the space $\sP^1/S(\Gamma)$ 
 at each point $U\in \sP^1/S(\Gamma)$.
For any $\xi,\xi'\in\sP^1$,
let $[\xi,\xi']$ denote the closed interval in 
(the tree) $\sP^1$ between $\xi,\xi'\in\sP^1$.
\end{notation}

Our aim is to contribute to studying moduli problems on
reductions of non-archimedean dynamics
and degeneration problems on meromorphic families of complex dynamics, 
by establishing two theorems
on Berkovich dynamics of $\phi$.

From now on, we assume $d:=\deg\phi>1$.

\subsection{Reduction theoretic characterization of the minimum locus for the hyperbolic resultant function}\label{sec:moduli}
In \cite[Definition 1.1]{Okugeometric}, the hyperbolic resultant function\footnote{Here we followed the naming in \cite{NO24}. The naming ``crucial function'' as in \cite{Okugeometric} would also be adequate.}  
\begin{gather} 
 \hypRes_\phi(\xi):=
\frac{\rho(\xi,\xi_g)}{2}
+\frac{\rho(\xi,\phi(\xi)\wedge_{\xi_g}\xi)-\int_{\sP^1}\rho(\xi_g,\xi\wedge_{\xi_g}\cdot)(\phi^*\delta_{\xi_g})(\cdot)}{d-1}\in\bR
\label{eq:hypres}
\end{gather}
on $\sH^1$ for $\phi$ has been introduced, 
where and below
$\rho$ is the hyperbolic metric on $\sH^1$
and, for any points 
$\xi,\xi',\xi''\in\sP^1$,
the point $\xi\wedge_{\xi''}\xi'$ is the unique point
simultaneously 
belonging to the intervals $[\xi,\xi''],[\xi',\xi'']$, and $[\xi,\xi']$
in $\sP^1$.
The function $\hypRes_\phi$ 
is indeed a proper, convex, and piecewise affine function on 
the Berkovich hyperbolic space $(\sH^1,\rho)$, which is an $\bR$-tree (\cite[Theorem 2]{Okugeometric}).
We denote by $\rd_{\vec{v}}=\frac{\rd}{\rd(\rho|[\xi,\xi'])}\bigl|_\xi$ 
the directional derivative operator on $(\sH^1,\rho)$
for a direction $\vec{v}=\widevec{\xi\xi'}\in T_\xi\sP^1$ at $\xi\in\sH^1$. 

Our principal result in this subsection is the following
reduction theoretic slope formula for $\hypRes_\phi$, which 
characterizes the minimum locus of $\hypRes_\phi$
in a reduction theoretic way.

\begin{mainth}\label{th:slope}
Let $\phi\in K(z)$ be a rational function on $\bP^1$ of degree $d>1$. Then
for every point $\xi\in\sH^1$ and every direction $\vec{v}\in T_\xi\sP^1$,
\begin{gather}
 \rd_{\vec{v}}\hypRes_\phi=\frac{1}{d-1}
\biggl(-\dep_{\vec{v}}\tilde{\phi}_{\xi}
+\frac{1}{2}\cdot\begin{cases}
		  d-1 & \text{if }\tilde{\phi}_\xi(\vec{v})=\vec{v}\\
		  d+1 & \text{otherwise}
		 \end{cases}\biggr);\label{eq:slopdep}
\end{gather}
in particular, the function $\hypRes_\phi$ takes its minimum
at $\xi$ (resp.\ uniquely at $\xi$) if and only if
for every $\vec{v}\in T_\xi\sP^1$,
\begin{gather}
\begin{cases}
 \dep_{\vec{v}}\tilde{\phi}_{\xi}\le\dfrac{d+1}{2}  
 \,\bigl(\text{resp.}\,\le\dfrac{d}{2}\bigr)
 \text{ and more strictly}\vspace*{5pt}\\
 \dep_{\vec{v}}\tilde{\phi}_{\xi}<\dfrac{d}{2} 
 \,\bigl(\text{resp.}\,<\dfrac{d-1}{2}\bigr) 
 \text{ if also }\tilde{\phi}_\xi(\vec{v})=\vec{v}. 
\end{cases}\label{eq:charecterization}
\end{gather}
\end{mainth}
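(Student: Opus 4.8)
The plan is to compute $\rd_{\vec{v}}\hypRes_\phi$ directly from the defining formula \eqref{eq:hypres} by differentiating each of the three terms along a direction $\vec{v}=\widevec{\xi\xi'}\in T_\xi\sP^1$ at a point $\xi\in\sH^1$. The first term $\rho(\xi,\xi_g)/2$ contributes $+\tfrac12$ if $\vec{v}$ points away from $\xi_g$ (i.e.\ $\xi_g\notin U(\vec{v})$) and $-\tfrac12$ if $\vec{v}=\widevec{\xi\xi_g}$; since exactly one direction at $\xi$ points toward $\xi_g$, this is the source of the ``$\pm$'' asymmetry and I expect it to recombine, after adding the other terms, into the stated $\tfrac{1}{2}(d\mp 1)$ dichotomy according to whether $\tilde\phi_\xi(\vec v)=\vec v$. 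The real work is in differentiating the two terms inside the $(d-1)^{-1}$ bracket.

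\smallskip
For the term $\rho(\xi,\phi(\xi)\wedge_{\xi_g}\xi)$, I would split into cases according to the position of $\phi(\xi)$ relative to $\xi$ and $\xi_g$. The point $\phi(\xi)\wedge_{\xi_g}\xi$ lies on $[\xi,\xi_g]$, so moving $\xi$ infinitesimally in direction $\vec v$ changes $\rho\bigl(\xi,\phi(\xi)\wedge_{\xi_g}\xi\bigr)$ by $0$, $+1$, or $-1$ depending on whether $\vec v$ points into or out of the segment joining $\xi$ to that meet point, and whether $\phi(\xi)$ itself is being ``carried along''; this is a purely tree-combinatorial computation using the ultrametric/tree identities for $\wedge_{\xi_g}$. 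For the integral term $\int_{\sP^1}\rho(\xi_g,\xi\wedge_{\xi_g}\cdot)\,(\phi^*\delta_{\xi_g})(\cdot)$, I would use that $\phi^*\delta_{\xi_g}$ is a sum of at most $d$ Dirac masses (counted with local degrees) at the points of $\phi^{-1}(\xi_g)$, reduce to differentiating $\rho(\xi_g,\xi\wedge_{\xi_g}\eta)$ for a fixed $\eta$, and observe that $\rd_{\vec v}$ of this quantity is $-1$ exactly when $\eta\in U(\vec v)$ (so $\xi$ is moving ``toward'' $\eta$ along the tree) and $0$ otherwise. Summing over $\eta$ with multiplicities, the derivative of the integral is $-(\phi^*\delta_{\xi_g})(U(\vec v))$; the key point then is to identify this with $\dep_{\vec v}\tilde\phi_\xi=(\phi^*\delta_\xi)(U(\vec v))$, which requires relating $\phi^*\delta_{\xi_g}$ restricted to $U(\vec v)$ with $\phi^*\delta_\xi$ restricted to $U(\vec v)$ — these agree precisely when the segment $[\xi,\xi_g]$ (minus $\xi$) is disjoint from $U(\vec v)$, i.e.\ for all $\vec v\neq\widevec{\xi\xi_g}$, and the exceptional direction $\widevec{\xi\xi_g}$ is where the bookkeeping must be done carefully using $\sum_{\vec v}(\phi^*\delta_\xi)(U(\vec v))=d$.

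\smallskip
Assembling the three contributions, I expect the $\pm\tfrac12$ from the first term and the combinatorial $\pm 1$'s from the second term to combine into the $\tfrac12(d\mp1)$ of \eqref{eq:slopdep}, with the two cases governed exactly by whether $\phi(\xi)\in U(\vec v)$ in the non-fixed case and by whether $\phi_{*,\xi}(\vec v)=\vec v$ in the fixed case $\phi(\xi)=\xi$ — which is precisely the condition $\tilde\phi_\xi(\vec v)=\vec v$ by Definition \ref{th:pointwise}. The hardest step will be the uniform treatment of the exceptional direction $\widevec{\xi\xi_g}$ across all three terms simultaneously and checking the case $\phi(\xi)=\xi_g$ or $\xi=\xi_g$ where several subcases collapse; a clean way around this is to verify the identity $\sum_{\vec v\in T_\xi\sP^1}\rd_{\vec v}\hypRes_\phi$ against the known Laplacian/valence of the piecewise-affine convex function $\hypRes_\phi$ from \cite[Theorem 2]{Okugeometric}, which pins down the exceptional slope once the generic ones are known. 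Finally, the characterization \eqref{eq:charecterization} of the (unique) minimum locus follows formally: $\hypRes_\phi$ is convex and proper on the $\bR$-tree $(\sH^1,\rho)$, so it attains its minimum at $\xi$ iff $\rd_{\vec v}\hypRes_\phi\ge 0$ for every $\vec v\in T_\xi\sP^1$, and uniquely iff all these slopes are $>0$; substituting \eqref{eq:slopdep} and clearing the positive factor $(d-1)^{-1}$ turns $\rd_{\vec v}\hypRes_\phi\ge 0$ into $\dep_{\vec v}\tilde\phi_\xi\le\tfrac12(d-1)$ when $\tilde\phi_\xi(\vec v)=\vec v$ and $\dep_{\vec v}\tilde\phi_\xi\le\tfrac{d+1}{2}$ otherwise (and the strict versions for uniqueness), which is exactly \eqref{eq:charecterization} after noting that $\dep_{\vec v}\tilde\phi_\xi$ is an integer so the non-strict bound $\le\tfrac{d+1}{2}$ together with the strict bound $<\tfrac d2$ on fixed directions can be rephrased as stated.
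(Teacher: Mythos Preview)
Your proposal has a genuine gap at the step where you try to pass from $(\phi^*\delta_{\xi_g})(U(\vec v))$ to $\dep_{\vec v}\tilde\phi_\xi=(\phi^*\delta_\xi)(U(\vec v))$. You assert that these two counts agree whenever $\vec v\neq\widevec{\xi\xi_g}$, but this is false: the preimages of $\xi_g$ and of $\xi$ under $\phi$ are entirely different configurations, and there is no reason for their masses in a given direction to coincide. A concrete counterexample is $\phi(z)=z^2/p$ with $|p|<1$, $\xi=\xi_{0,s}$ for any $s$ with $|p|<s<|p|^{1/2}$, and $\vec v=\widevec{\xi\,0}$: here $\phi^{-1}(\xi_g)=\{\xi_{0,|p|^{1/2}}\}$ lies \emph{outside} $U(\vec v)$ while $\phi^{-1}(\xi)=\{\xi_{0,(s|p|)^{1/2}}\}$ lies \emph{inside} $U(\vec v)$, so the two counts are $0$ and $2$ respectively. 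Since this discrepancy occurs for ordinary (non-exceptional) directions, your fallback of fixing things at the single direction $\widevec{\xi\xi_g}$ via the Laplacian cannot repair the argument. (There is also a sign error in your derivative of $\rho(\xi_g,\xi\wedge_{\xi_g}\eta)$: for $\eta\in U(\vec v)$ with $\vec v\neq\widevec{\xi\xi_g}$ one gets $+1$, not $-1$.)

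The paper sidesteps all of this by not differentiating the defining formula \eqref{eq:hypres} at all. Instead it invokes the \emph{difference formula} \cite[(1.2)]{Okugeometric}, which expresses $\hypRes_\phi(\xi')-\hypRes_\phi(\xi)$ with the base point $\xi_g$ replaced by $\xi$ itself: the first term becomes $\rho(\xi',\xi)/2$ (so its $\vec v$-derivative is always $+\tfrac12$, with no $\pm$ case split), and the integral term is taken against $\phi^*\delta_\xi$ rather than $\phi^*\delta_{\xi_g}$, so that its $\vec v$-derivative is \emph{directly} $(\phi^*\delta_\xi)(U(\vec v))=\dep_{\vec v}\tilde\phi_\xi$ with no identification needed. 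The only genuine case analysis is then for $\rd_{\vec v}\bigl(\xi'\mapsto\rho(\xi',\phi(\xi')\wedge_\xi\xi')\bigr)$, which is $0$ or $1$ according as $\tilde\phi_\xi(\vec v)=\vec v$ or not. Your final paragraph, deducing \eqref{eq:charecterization} from \eqref{eq:slopdep} via convexity, is correct and matches the paper.
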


The proof of Theorem \ref{th:slope} is by a simple computation 
in Berkovich hyperbolic geometry. In \cite[(1.1)]{Okugeometric},
using the function $\hypRes_\phi$,
the following explicit, global, and geometric expression
\begin{gather}
 \ordRes_\phi
=2d(d-1)\hypRes_\phi-\log(
\bigl|\Res(\text{a degree $d$ minimal 
homogeneous lift of }\phi)\bigr|)
\label{eq:ordhyp}
\end{gather}
on $\sH^1$
of Rumely's resultant function $\ordRes_\phi$
(that Rumely defined
more algebraically and implicitly and studied in
\cite{Rumely13,Rumely17}) is given.
In particular the minimum loci of $\hypRes_\phi$ and of $\ordRes_\phi$
are identical. By this fact and the above mentioned 
identification of the intrinsic depths of $\tilde{\phi}_{\xi_g}$
with the (algebraic) depths of $\hat{\phi}$ modulo $K^{\circ\circ}$,
the characterization \eqref{eq:charecterization} of the minimum locus
of $\hypRes_\phi$ gives a more conceptual and simpler proof of
Rumely's moduli theoretic characterization
of the type II minimum locus of $\ordRes_\phi$ 
(see Section \ref{sec:slope}).
This found some application in
a recent hybrid analysis (see, e.g., \cite{FavreGong24}). 

\subsection{Equidistribution property for the intrinsic depths towards
the reduced canonical equilibrium measure}\label{sec:deglim}
The canonical equilibrium measure $\nu_\phi$ for $\phi$ on $\sP^1$
is the unique probability Radon measure $\nu$ on $\sP^1$ satisfying
both the balancing property 
\begin{gather}
 \phi^*\nu=d\cdot\nu\quad\text{on }\sP^1 
\end{gather} 
under $\phi$
and the non-exceptionality condition $\nu(E(\phi))=0$ for $\phi$, where
the at most countable subset
$E(\phi):=\{a\in\bP^1:\#\bigcup_{n\in\bN}\phi^{-n}(a)<+\infty\}$
in $\bP^1$
is called the (classical) exceptional set of $\phi$.
The existence of such $\nu_\phi$ is based on 
the equidistribution property
\begin{gather}
 \nu_\phi=\lim_{n\to\infty}\frac{(\phi^n)^*\delta_{\xi}}{d^n}\quad\text{weakly on }\sP^1,
\label{eq:equidist}
\end{gather} 
for any $\xi\in\sP^1\setminus E(\phi)$,
of the averaged iterated pullbacks of $\delta_\xi$ under $\phi$
(\cite{BR10, FR09, ChambertLoir06});
in particular, for every $\xi\in\sH^1$,
denoting by $\pi_\xi$ the projection from $\sP^1$
to the factored space $\sP^1/S(\{\xi\})$ induced by the partition
$S(\{\xi\})(=\{\{\xi\}\}\sqcup T_\xi\sP^1)$, the pushforward (or reduction) $(\pi_\xi)_*\nu_\phi$
of the measure $\nu_\phi$ under $\pi_\xi$ 
is a probability measure on $\sP^1/S(\{\xi\})$ 
written as the sum of at most countably many
atoms, and moreover 
\begin{gather*}
 ((\pi_\xi)_*\nu_\phi)(\{\{\xi\}\})(=\nu_\phi(\{\xi\}))>0 
\end{gather*}
if and only if $\xi$ is totally invariant under $\phi$
in that $\phi^{-1}(\xi)=\{\xi\}$.

Our principal result in this subsection is the following 
equidistribution property for the intrinsic depths.

\begin{mainth}\label{th:unicity}
Let $\phi\in K(z)$ be a rational function on $\bP^1$ of degree $d>1$.
Then for every $\xi\in\sH^1$ not totally invariant under $\phi$, we have
\begin{gather}
 \lim_{n\to\infty}\frac{1}{d^n}\sum_{\vec{v}\in T_\xi\sP^1}\bigl(\dep_{\vec{v}}(\widetilde{\phi^n})_\xi\bigr)\delta_{U(\vec{v})}
 =(\pi_\xi)_*\nu_\phi
\label{eq:surplusconv}
\end{gather}
weakly on $\sP^1/S(\{\xi\})$. 
\end{mainth}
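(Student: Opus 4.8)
The plan is to unwind the definition of the intrinsic depths, identify the left-hand side of \eqref{eq:surplusconv} with the $\pi_\xi$-pushforward of the averaged iterated pullback $d^{-n}(\phi^n)^*\delta_\xi$ with its possible atom at $\{\xi\}$ removed, and then feed in the equidistribution \eqref{eq:equidist} together with the hypothesis on $\xi$, which is present precisely to kill that atom in the limit.

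First I would reformulate the left-hand side. Applying \eqref{eq:geomdepth} to the iterate $\phi^n$, we have $\dep_{\vec v}(\widetilde{\phi^n})_\xi=((\phi^n)^*\delta_\xi)(U(\vec v))$ for every $\vec v\in T_\xi\sP^1$, so setting $\mu_n:=d^{-n}(\phi^n)^*\delta_\xi$, which is a probability Radon measure on $\sP^1$, the left-hand side of \eqref{eq:surplusconv} is exactly $\sum_{\vec v\in T_\xi\sP^1}\mu_n(U(\vec v))\,\delta_{U(\vec v)}$; that is, it is the pushforward $(\pi_\xi)_*\mu_n$ on $\sP^1/S(\{\xi\})$ with the mass $\mu_n(\{\xi\})$ at the point $\{\xi\}$ deleted. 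Next I would record the two inputs: since $E(\phi)\subset\bP^1$ while $\xi\in\sH^1=\sP^1\setminus\bP^1$, we have $\xi\notin E(\phi)$, so \eqref{eq:equidist} gives $\mu_n\to\nu_\phi$ weakly on $\sP^1$; and since $\xi$ is not totally invariant under $\phi$, the characterization of $\nu_\phi(\{\xi\})$ recalled in Section~\ref{sec:intro} gives $\nu_\phi(\{\xi\})=0$, so that $(\pi_\xi)_*\nu_\phi=\sum_{\vec v\in T_\xi\sP^1}\nu_\phi(U(\vec v))\,\delta_{U(\vec v)}$ is the right-hand side of \eqref{eq:surplusconv}.

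Then I would pass to the limit coefficientwise. For each $\vec v\in T_\xi\sP^1$ the set $U(\vec v)$ is open in $\sP^1$ with closure $U(\vec v)\cup\{\xi\}$, and $\nu_\phi$ has no mass on $\{\xi\}$; hence the portmanteau theorem, applied both to the open set $U(\vec v)$ and to the closed set $\overline{U(\vec v)}$, yields $\mu_n(U(\vec v))\to\nu_\phi(U(\vec v))$, and similarly $\mu_n(\{\xi\})\to 0$. This already gives the coefficientwise convergence asserted by \eqref{eq:surplusconv}; and since in addition the total masses converge, $\sum_{\vec v}\mu_n(U(\vec v))=1-\mu_n(\{\xi\})\to 1=\sum_{\vec v}\nu_\phi(U(\vec v))$, an elementary upgrade (Fatou's lemma together with dominated convergence against the summable dominating family $(\nu_\phi(U(\vec v)))_{\vec v}$, i.e. Scheff\'e's lemma on the at most countable common support) shows that the convergence in \eqref{eq:surplusconv} holds even in total variation on $\sP^1/S(\{\xi\})$.

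I expect the only genuinely non-formal point to be this coefficientwise passage to the limit across the fiber $\{\xi\}$, where $\pi_\xi$ fails to be continuous: this is exactly the place where the hypothesis that $\xi$ is not totally invariant — equivalently $\nu_\phi(\{\xi\})=0$ — is used. If instead $\xi$ were totally invariant, then $\nu_\phi=\delta_\xi$ and $\mu_n=\delta_\xi$ for all $n$, so the left-hand side of \eqref{eq:surplusconv} would be the zero measure while $(\pi_\xi)_*\nu_\phi=\delta_{\{\xi\}}$; this confirms that the hypothesis cannot be dropped and pinpoints its role.
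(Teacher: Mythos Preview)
Your proof is correct and follows essentially the same approach as the paper: both feed $\xi\in\sH^1\subset\sP^1\setminus E(\phi)$ into the equidistribution \eqref{eq:equidist} and use $\nu_\phi(\{\xi\})=0$ (from the non-total-invariance hypothesis) to pass from weak convergence of $d^{-n}(\phi^n)^*\delta_\xi$ on $\sP^1$ to coefficientwise convergence on $\sP^1/S(\{\xi\})$. The only difference is in packaging: you invoke the portmanteau theorem directly on the $\nu_\phi$-continuity sets $U(\vec v)$ (with boundary $\{\xi\}$), whereas the paper unpacks that same step into inner regularity of $\nu_\phi$ plus Fatou's lemma along a weakly convergent subsequence.
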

Theorem \ref{th:unicity} is a simple consequence of 
the equidistribution property \eqref{eq:equidist}
as well as the inner regularity of $\nu_\phi$ and Fatou's lemma.
When the non-archimedean
field $K$ is the Levi-Civita field $\bL$,
which is the completion of an algebraic closure of the Laurent series field $\bC((t))$
equipped with a $t$-adic norm $|\cdot|_r$ normalized as $|t|_r=r$
(fixing $r\in(0,1)$ once and for all), 
letting $\cO(\bD)$ be the ring of
holomorphic functions on the open unit disk $\bD$ in $\bC$
and regarding $\cO(\bD)[t^{-1}]$ 
as a subring in $\bL$, any rational function
$f(z)\in(\cO(\bD)[t^{-1}])(z)$ of degree $d>1$
determines not only an element in $\bL(z)$ of degree $d$ but also
a holomorphic family 
$(f_t)_{0<|t|\ll 1}$ of complex rational 
functions $f_t$ on $\bP^1(\bC)$
of degree $d$ (where $|\cdot|$ denotes the Euclidean norm on $\bC$), 
and for each such a specialization $f_t$ of $f$,
there is
a canonical equilibrium measure
\begin{gather*}
 \mu_t:=\mu_{f_t} 
\end{gather*}
on $\bP^1(\bC)$ which is indeed
the unique maximal entropy measure for $f_t$.
Based on Theorem \ref{th:unicity},
we give an improvement of 
the degenerating limit theorem for $(\mu_t)_{0<|t|\ll 1}$
(see Theorem \ref{th:degenerating}
in Section \ref{sec:degenerating}), which has been deepen in a recent study of degenerating iteration maps (\cite{KN24}).

\subsection*{Remarks on notations}
The use of notations and terminologies in the Berkovich dynamics theory
is still not uniform in the literature.
We adopt Favre--Rivera-Letelier's notation $\sP^1$ for the Berkovich projective line and the notation $\xi$, which can be thought of as a variant of Baker--Rumely's notation $\zeta$, for a point in $\sP^1$
(while Favre--Rivera-Letelier's is $\cS$).
Again, several different notations are
used for the Berkovich upper half space, for which 
we adopted Favre--Rivera-Letelier's notation $\sH^1$. 
On the naming, the hyperbolic metric $\rho$
is also called the big model metric. 
 
\section{Proof of Theorem \ref{th:slope} and its application to
a moduli problem on reductions}\label{sec:slope}
Let $\phi\in K(z)$ be a rational function on $\bP^1$ of degree $d>1$.

\begin{proof}[Proof of Theorem \ref{th:slope}]
 By the difference (or base point change) formula
\begin{gather*}
 \hypRes_\phi(\xi')-\hypRes_\phi(\xi)
=
\frac{\rho(\xi',\xi)}{2}
+\frac{\rho(\xi',\phi(\xi')\wedge_{\xi}\xi')-\int_{\sP^1}\rho(\xi',\xi'\wedge_{\xi}\cdot\,)(\phi^*\delta_{\xi})(\cdot)}{d-1},
\end{gather*}
$\xi',\xi\in\sH^1$ (\cite[(1.2)]{Okugeometric}), 
for every direction $\vec{v}=\widevec{\xi\xi'}\in T_\xi\sP^1$ at 
a point $\xi\in\sH^1$, we have
 \begin{gather*}
 \rd_{\vec{v}}\hypRes_\phi
 =
  \frac{1}{2}
+\frac{1}{d-1}
\biggl(\rd_{\vec{v}}\bigl(\xi'\mapsto\rho(\xi',\phi(\xi')\wedge_\xi\xi')\bigr)
 -\rd_{\vec{v}}\Bigl(\xi'\mapsto\int_{\sP^1}\rho(\xi',\xi'\wedge_{\xi}\cdot\,)(\phi^*\delta_{\xi})\Bigr)\biggr).
 \end{gather*}
On one hand, we have the equality 
\begin{gather*}
 \rd_{\vec{v}}\Bigl(\xi'\mapsto\int_{\sP^1}\rho(\xi',\xi'\wedge_{\xi}\cdot)(\phi^*\delta_{\xi})\Bigr)=(\phi^*\delta_\xi)(U(\vec{v}))
 =:\dep_{\vec{v}}\tilde{\phi}_{\xi} 
\end{gather*}
from the computation in \cite[(4.2)]{Okugeometric}.
On the other hand, we also have
\begin{gather*}
 \rd_{\vec{v}}\bigl(\xi'\mapsto\rho(\xi',\phi(\xi')\wedge_\xi\xi')\bigr)=
 \begin{cases}
 0 &\text{if }\tilde{\phi}_\xi(\vec{v})=\vec{v},\\
 1 &\text{otherwise};
 \end{cases}
\end{gather*}
for, if $\xi'\in U(\vec{v})$ is close enough to $\xi$, then we also
compute
 \begin{gather*}
 \rho\bigl(\,\cdot\,,\phi(\cdot)\wedge_{\xi}\,\cdot\,\bigr)
=
 \begin{cases}
 0 &
 \text{either if }\phi(\xi)=\xi
\text{ and }\phi_*\vec{v}=\vec{v}
 \text{ or if }\phi(\xi)\neq\xi\text{ and }\widevec{\xi\phi(\xi)}=\vec{v}\\
 \rho(\,\cdot\,,\xi)&
 \text{either if }\phi(\xi)=\xi
\text{ and }\phi_*\vec{v}\neq\vec{v}
 \text{ or if }\phi(\xi)\neq\xi\text{ and }\widevec{\xi\phi(\xi)}\neq\vec{v}
 \end{cases}
 \end{gather*} 
 on $[\xi,\xi']\setminus\{\xi\}$ (\cite[(4.1)]{Okugeometric}). 
 Now the proof of \eqref{eq:slopdep} 
 is complete. 

The formula \eqref{eq:slopdep} together with 
the convexity and piecewise affineness of $\hypRes_\phi$
on $(\sH^1,\rho)$ concludes \eqref{eq:charecterization}.
\end{proof}

Let us now recall the statement of
Rumely's moduli theoretic characterization of the type II minimum locus
of Rumely's resultant function $\ordRes_\phi$ on $\sH^1$
mentioned in Subsection \ref{sec:moduli},
and also fix some notations and include some details. 

\begin{theorem}[{\cite[Theorem C]{Rumely17}}]\label{th:ss}
 Let $\phi\in K(z)$ be a rational function on $\bP^1$ of degree $d>1$.
 Then for every type II point $\xi\in\sH^1$, the following 
 statements are equivalent;
 \begin{inparaenum}[$(1)$]
 \item $\ordRes_\phi$ takes its minimum at $\xi$ (resp.\ uniquely at $\xi$).
 \item $\widehat{(M\circ\phi\circ M^{-1})}\in(\bP^{2d+1})^{\operatorname{ss}}(k)$ (resp.\ $\in(\bP^{2d+1})^{\operatorname{s}}(k)$) writing $\xi=M(\xi_g)$
for a unique element $[M]=\PGL(2,K^\circ)M$ in the left orbit space
$\PGL(2,K^\circ)\backslash\PGL(2,K)$.
 \end{inparaenum}
\end{theorem}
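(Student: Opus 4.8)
The plan is to read Theorem~\ref{th:ss} off of Theorem~\ref{th:slope} by composing two translations that are already in hand: the global comparison \eqref{eq:ordhyp} of $\ordRes_\phi$ with $\hypRes_\phi$, and the dictionary recorded right after Definition~\ref{th:pointwise} between the intrinsic data of $\phi$ at the Gauss point $\xi_g$ and the algebraic reduction data of $\phi$ modulo $K^{\circ\circ}$. Under these translations, the reduction theoretic characterization \eqref{eq:charecterization} of the minimum locus of $\hypRes_\phi$ becomes a condition on the algebraic depths $\dep_{\hat a}\hat\phi$ of the coefficient reduction, and what remains is to recognize that condition as the classical Hilbert--Mumford criterion for (semi)stability under the conjugation action.

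Concretely, I would proceed in four steps. \emph{Step 1.} By \eqref{eq:ordhyp}, $\ordRes_\phi$ equals $2d(d-1)\hypRes_\phi$ plus a constant independent of $\xi$, and $2d(d-1)>0$, so $\ordRes_\phi$ and $\hypRes_\phi$ have the same minimum locus on $\sH^1$ and the minimum of one is attained uniquely at a given point exactly when the minimum of the other is; it thus suffices to characterize, for a type II point $\xi$, when $\hypRes_\phi$ attains its minimum (resp.\ its unique minimum) at $\xi$. \emph{Step 2.} A type II point is $\PGL(2,K)$-conjugate to $\xi_g$; writing $\xi=M(\xi_g)$ for the unique $[M]\in\PGL(2,K^\circ)\backslash\PGL(2,K)$ and using that $\tilde\phi_\xi$ and all of its depths $\dep_{\vec v}\tilde\phi_\xi$ are $\PGL(2,K)$-equivariant (they are built from the equivariant pullback operator $\phi^*$ of measures and the tangent maps of $\phi$), the characterization \eqref{eq:charecterization} at $\xi$ for $\phi$ becomes \eqref{eq:charecterization} at $\xi_g$ for the suitable $\PGL(2,K)$-conjugate of $\phi$; since $\xi_g$ is stabilized by $\PGL(2,K^\circ)$, that conjugate's coefficient reduction modulo $K^{\circ\circ}$ is well defined up to the residual $\PGL(2,k)$-action---which preserves $(\bP^{2d+1})^{\operatorname{ss}}(k)$ and $(\bP^{2d+1})^{\operatorname{s}}(k)$---and is the point $\widehat{(M\circ\phi\circ M^{-1})}$ of the statement (in the conventions of \cite{Rumely17}). \emph{Step 3.} Applying Theorem~\ref{th:slope} at $\xi_g$ together with the dictionary after Definition~\ref{th:pointwise}: $\hypRes$ attains its minimum (resp.\ unique minimum) at $\xi_g$ if and only if $\dep_{\hat a}\hat\phi\le(d+1)/2$ (resp.\ $\le d/2$) for every $\hat a\in\bP^1(k)$, and $\dep_{\hat a}\hat\phi<d/2$ (resp.\ $<(d-1)/2$) for every $\hat a\in\bP^1(k)$ with $\tilde\phi(\hat a)=\hat a$, where $\tilde\phi\in k(\zeta)$ is the reduction of $\phi$ (in the degenerate case $\phi(\xi_g)\ne\xi_g$, in which $\tilde\phi$ is a constant and $\tilde\phi_{\xi_g}$ the constant selfmap of $T_{\xi_g}\sP^1$ with value $\widevec{\xi_g(\phi(\xi_g))}$, the second condition concerns only that single value of $\tilde\phi$). \emph{Step 4.} Invoke the classical fact---established by the Hilbert--Mumford numerical criterion for the conjugation action of $\PGL_2$ on $\bP^{2d+1}$---that a coefficient reduction lies in $(\bP^{2d+1})^{\operatorname{ss}}(k)$ (resp.\ in $(\bP^{2d+1})^{\operatorname{s}}(k)$) if and only if exactly these depth inequalities hold; combined with Steps~1--3 this is Theorem~\ref{th:ss}.

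I expect Step~4 to be the only genuinely nontrivial ingredient: one has to run the Hilbert--Mumford computation, reducing the Mumford weight of a one-parameter subgroup to the number, counted with multiplicity, of zeros and poles of the reduced map that collapse onto each of the two fixed points of the subgroup on $\bP^1(k)$---that number being precisely $\dep_{\hat a}\hat\phi$ there---and then verify that the thresholds produced are $(d+1)/2$, $d/2$ and $d/2$, $(d-1)/2$ as in \eqref{eq:charecterization}. This bookkeeping is carried out in \cite{Rumely17} (see also \cite{Rumely13}), and here it is deliberately used as a black box: the entire point of the present approach is that Theorem~\ref{th:slope}, proved by a short computation in Berkovich hyperbolic geometry, furnishes the depth characterization of the minimum locus directly, replacing Rumely's original and more algebraic derivation of it.
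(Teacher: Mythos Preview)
Your proposal is correct and follows essentially the same route as the paper: use \eqref{eq:ordhyp} to pass from $\ordRes_\phi$ to $\hypRes_\phi$, identify a type~II point with $M(\xi_g)$ via the bijection $\PGL(2,K^\circ)\backslash\PGL(2,K)\leftrightarrow\sH^1_{\mathrm{II}}$, translate the intrinsic depth characterization \eqref{eq:charecterization} at $\xi$ into algebraic depth conditions on the coefficient reduction of the conjugate via the dictionary following Definition~\ref{th:pointwise}, and recognize the latter as the Hilbert--Mumford criterion for (semi)stability. The only minor discrepancy is bibliographic: the paper cites \cite{Silverman98}, \cite{STW14}, and \cite{DeMarcoQuad} for the Hilbert--Mumford bookkeeping rather than \cite{Rumely17,Rumely13}.
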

The quotient epimorphism 
$K^{\circ}\ni a\mapsto\hat{a}\in k:=K^\circ/K^{\circ\circ}$
extends to a map from $\bP^1(K)=K\sqcup\{\infty\}$ to $\bP^1(k)=k\sqcup\{\infty\}$ by setting $\hat{z}:\equiv\infty\in\bP^1(k)$ for any $z\in\bP^1(K)\setminus K^\circ$.
Writing 
\begin{gather*}
\phi(z)=\frac{\sum_{i=0}^db_iz^i}{\sum_{i=0}^da_iz^i},
\end{gather*} 
the {\em coefficient reduction} of $\phi$
 modulo $K^{\circ\circ}$
 is the point
 $\hat{\phi}:=[\widehat{a_0}:\cdots:\widehat{a_d}:\widehat{b_0}:\cdots:\widehat{b_d}]\in\bP^{2d+1}(k)$ 
 induced by a 
minimal representative $(a_0,\ldots,a_d,b_0,\ldots,b_d)\in 
(K^{\circ})^{2d+2}\setminus(K^{\circ\circ})^{2d+2}$
of the point $[a_0:\cdots:a_d:b_0:\cdots:b_d]\in\bP^{2d+1}(K)$. 
Setting also 
\begin{gather*}
 H_{\hat{\phi}}(X_0,X_1)
:=\GCD\Bigl(\sum_{i=0}^d\widehat{b_i}X_0^{d-i}X_1^i,\sum_{i=0}^d\widehat{a_i}X_0^{d-i}X_1^i\Bigr)
\end{gather*}
in $k[X_0,X_1]\setminus\{0\}$,
the {\em reduction} of $\phi$ modulo $K^{\circ\circ}$ is a 
(possibly constant) rational function
\begin{gather*}
 \tilde{\phi}(\zeta):=\frac{\sum_{i=0}^d\widehat{b_i}\zeta^i/H_{\hat{\phi}}(1,\zeta)}{\sum_{i=0}^d\widehat{a_i}\zeta^i/H_{\hat{\phi}}(1,\zeta)}\in k(\zeta)\quad 
\end{gather*}
on $\bP^1(k)$, and the non-negative integer 
$\dep_{\hat{a}}\hat{\phi}:=[H_{\hat{\phi}}=0](\hat{a})\in\{0,1,\ldots,d\}$
is called the (algebraic) depth of $\hat{\phi}$ modulo $K^{\circ\circ}$ at each $\hat{a}\in\bP^1(k)$.
This coincides with the intrinsic depth $\dep_{\widevec{\xi_ga}}\tilde{\phi}_{\xi_g}$
(by the non-archimedean argument principle \eqref{eq:FKN} below together with
\cite[Lemma 3.17]{Faber13topologyI} and \cite[Proof of Corollary 2.11]{KN23}).

Applying the geometric invariant theory to the conjugation action of 
$\SL_2$ on $\bP^{2d+1}$,
let us denote by $(\bP^{2d+1})^{\operatorname{ss}}$ and
$(\bP^{2d+1})^{\operatorname{s}}$ respectively
the (GIT-)semistable and stable loci in $\bP^{2d+1}$
with respect to the $\SL_2$-linearized line bundle $\cO_{\bP^{2d+1}}(1)$.
By the Hilbert-Mumford numerical criterion,
we have $\hat{\phi}\in(\bP^{2d+1})^{\operatorname{ss}}(k)$
(resp.\ $\hat{\phi}\in(\bP^{2d+1})^{\operatorname{s}}(k)$)
if and only if
for every $\hat{a}\in\bP^1(k)$,
\begin{gather*}
\begin{cases}
 \dep_{\hat{a}}\hat{\phi}\le\dfrac{d+1}{2}  
 \,\bigl(\text{resp.}\,\le\dfrac{d}{2}\bigr)
\text{ and more strictly}\vspace*{5pt}\\
 \dep_{\hat{a}}\hat{\phi}<\dfrac{d}{2} 
 \,\bigl(\text{resp.}\,<\dfrac{d-1}{2}\bigr) 
 \text{ if also }\tilde{\phi}(\hat{a})=\hat{a}
\end{cases}
\end{gather*}
(see \cite{Silverman98} for more details, and also \cite[Theorem 3.5]{STW14} or \cite[Section 3]{DeMarcoQuad}).

Once the above background from GIT is at our disposal,
recalling the expression \eqref{eq:ordhyp} of $\ordRes_\phi$
in terms of $\hypRes_\phi$,
the coincidence of the intrinsic and algebraic depths, and the identification
of the left orbit space $\PGL(2,K^\circ)\backslash\PGL(2,K)$
with the totality $\sH^1_{\mathrm{II}}$ of type II points in $\sH^1$
by the bijection $[M]=\PGL(2,K^\circ)M\leftrightarrow M(\xi_g)$,
the latter half in Theorem \ref{th:slope} concludes
Theorem \ref{th:ss}.

\section{Proof of Theorem \ref{th:unicity} and its application to
a degeneration problem in complex dynamics}\label{sec:degenerating}

\begin{proof}[Proof of Theorem \ref{th:unicity}]
 Let $\phi\in K(z)$ be a rational function on $\bP^1$ of degree $d>1$,
 pick a point $\xi\in\sH^1$ 
such that $\phi^{-1}(\xi)\neq\{\xi\}$, and
 pick any weak limit 
\begin{gather*}
 \mu=\lim_{j\to\infty}
\frac{1}{d^{n_j}}
\sum_{\vec{v}\in T_\xi\sP^1}\bigl(\dep_{\vec{v}}(\widetilde{\phi^{n_j}})_\xi\bigr)\delta_{U(\vec{v})}
\end{gather*}
on $\sP^1/ S(\{\xi\})$
for some $(n_j)$ in $\bN$ tending to $\infty$ 
as $j\to\infty$,
which is a probability measure on $\sP^1/ S(\{\xi\})$ and is
written as the sum of at most countably many atoms.
Then by a diagonal argument, taking a subsequence of $(n_j)$ if necessary, 
Fatou's lemma yields
\begin{gather*}
\sum_{\vec{v}\in T_\xi\sP^1}
\biggl(\lim_{j\to\infty}
\frac{\dep_{\vec{v}}(\widetilde{\phi^{n_j}})_\xi}{d^{n_j}}\biggr)
\delta_{U(\vec{v})}
\le\mu
\end{gather*}
on $\sP^1/S(\{\xi\})$. 
On the other hand,
for every $\vec{v}\in T_\xi\sP^1(=(\sP^1/S(\{\xi\}))\setminus\{\{\xi\}\}$), 
using the inner regularity of the Radon measure $\nu_\phi$ on $\sP^1$ and
the equidistribution property \eqref{eq:equidist}, we have
\begin{multline*}
\bigl((\pi_\xi)_*\nu_\phi\bigr)(\{U(\vec{v})\})
=\nu_\phi(U(\vec{v}))
=\lim_{\epsilon\searrow 0}\bigl(\nu_\phi(U(\vec{v}))-\epsilon\bigr)\\
  \le\limsup_{j\to\infty}\frac{((\phi^{n_j})^*\delta_\xi)(U(\vec{v}))}{d^{n_j}}=\lim_{j\to\infty}\frac{\dep_{\vec{v}}(\widetilde{\phi^{n_j}})_\xi}{d^{n_j}}.
\end{multline*} 
Hence we have
$(\pi_\xi)_*\nu_\phi\le\mu$ and in turn $(\pi_\xi)_*\nu_\phi=\mu$
on $\sP^1/S(\{\xi\})$
also using $((\pi_\xi)_*\nu_\phi)(\{\{\xi\}\})(=\nu_\phi(\{\xi\}))=0$ 
under the assumption $\phi^{-1}(\xi)\neq\{\xi\}$. 
\end{proof}

We conclude this subsection with 
an application of Theorem \ref{th:unicity}
to a degeneration
problem mentioned in Subsection \ref{sec:deglim}
on a meromorphic family 
$f=(f_t)_{0<|t|\ll 1}\in(\cO(\bD)[t^{-1}])(z)\subset\bL(z)$ of complex rational functions $f_t$ 
on $\bP^1(\bC)$ 
of degree $d>1$ and in particular on
the family $(\mu_t)_{0<|t|\ll 1}$
of the (atomless) unique maximal entropy measures $\mu_t=\mu_{f_t}$ on $\bP^1(\bC)$
of the specializations $f_t$, where $K=\bL$. In this setting, $k_{\bL}\cong\bC$ so 
$T_{\xi_g}\sP^1_{\bL}\cong\bP^1(\bC)$
in a canonical way as mentioned in Section \ref{sec:intro}
so that the canonical equilibrium measure $\nu_f$ for $f\in\bL(z)$
on $\sP^1_{\bL}$ projects (or reduces) to the probability measure
$(\pi_{\xi_g})_*\nu_f$
on 
\begin{gather*}
 \sP^1_{\bL}/S(\{\xi\})
=\{\{\xi_g\}\}\sqcup T_{\xi_g}\sP^1_{\bL}
\cong\{\{\xi_g\}\}\sqcup\bP^1(\bC);
\end{gather*}
in particular, $(\pi_{\xi_g})_*\nu_f$ is
regarded as a probability measure on $\bP^1(\bC)$
written as
the sum of at most countably many atoms
if and only if $f^{-1}(\xi_g)\neq\{\xi_g\}$. 

\begin{fact}[Reduced pullbacks of measures]
For every $n\in\bN$, the operator $(f^n)_{\xi_g}^*$ is the reduced version at $\xi_g$ of the pullback operator $(f^n)^*$
 for measures on $\sP^1_{\bL}$
 mentioned in \S \ref{sec:intro}
 and is defined so that
 for any probability measure $\omega$ on $\sP^1_{\bL}/S(\{\xi_g,f^n(\xi_g)\})$,
$((f^n)_{\xi_g}^*\omega)/d^n$ is a probability
measure on $\sP^1_{\bL}/S(\{\xi_g\})$
and satisfies
 \begin{gather*}
 \int_{\sP^1_{\bL}/S(\{\xi_g\})}h\bigl((f^n)_{\xi_g}^*\omega\bigr)
 =\int_{\sP^1_{\bL}/S(\{\xi_g,f^n(\xi_g)\})}\bigl(((f^n)_{\xi_g})_*h\bigr)\omega 
 \end{gather*}
 for every test function $h$ on $\sP^1_{\bL}/S(\{{\xi_g}\})$, where for every $V\in S(\{\xi_g, f^n(\xi_g)\})$, we set
 \begin{gather*}
 \bigl(((f^n)_{\xi_g})_*h\bigr)(V):=\sum_{U\in S(\{\xi_g\})}
 \bigl((f^n)^*\delta_\xi\bigr)(U)\cdot h(U)
 \end{gather*}
fixing any $\xi\in V$, or equivalently that
\begin{multline}   
\label{eq:redpull}(f^n)_{\xi_g}^*\omega-\bigl(\deg_{\xi_g}(f^n)\bigr)\cdot\omega\bigl(\{\{f^n(\xi_g)\}\}\bigr)\delta_{\{\xi_g\}}
=\sum_{\vec{v}\in T_{\xi_g}\sP^1}\bigl(\dep_{\vec{v}}(\widetilde{f^n})_{\xi_g}\bigr)\delta_{U(\vec{v})}+\\ 
+\sum_{\vec{v}\in T_{\xi_g}\sP^1_{\bL}}
 \Biggl(\sum_{V\in S(\{\xi_g,f^n(\xi_g)\})}
 \Bigl(
 m_{\vec{v}}(f^n)\times\\
\times\begin{cases}
 1 & \text{if not only }V=U\bigl((f^n)_*\vec{v}\bigr)\text{ but, when }f^n(\xi_g)\neq\xi_g,\text{ also }
 (f^n)_*\vec{v}\neq\widevec{f^n(\xi_g)\xi_g};\\
 -1 & \text{if }f^n(\xi_g)\neq\xi_g,
	     (f^n)_*\vec{v}=\widevec{f^n(\xi_g)\xi_g},\text{ and }V\not\subset 
 U\bigl((f^n)_*\vec{v}\bigr)(=U\bigl(\widevec{f^n(\xi_g)\xi_g}\bigr));\\
 0 & \text{otherwise}
 \end{cases}\\
\Bigr)
 \omega(\{V\})\Biggr)\delta_{U(\vec{v})}\quad\text{on }\sP^1/S(\{\xi_g\}).
 \end{multline}
Here the well-definedness and the second 
(defining) equality \eqref{eq:redpull} 
of $(f^n)^*_{\xi_g}$ follow from
 the non-archimedean argument principle
 \begin{gather}
  \bigl(f^*\delta_{\xi'}\bigr)(U(\vec{u}))
 =s_{\vec{u}}(f)+
 \begin{cases}
  m_{\vec{u}}(f) & \text{if }U(f_*\vec{u})\ni\xi',\\
  0 & \text{otherwise} 
 \end{cases}\label{eq:FKN}
 \end{gather}
for every $\xi'\in\sP^1_{\bL}$ and
every direction 
$\vec{u}\in T_\xi\sP^1_{\bL}$
at every point $\xi\in\sH^1(\bL)$
in terms of the
directional local degree $m_{\vec{u}}(f)\in\{1,2,\ldots,\deg_\xi(f)\}$ and
the surplus local degree $s_{\vec{u}}(f)\in\{0,1,\ldots,\deg_\xi(f)\}$ of $f$; they satisfy the equalities
\begin{gather}
 \sum_{\vec{u}\in T_\xi\sP^1_{\bL}:\,f_*\vec{u}=\vec{w}}m_{\vec{u}}(f)=\deg_\xi(f)
\quad\text{for every }\vec{w}\in T_{f(\xi)}\sP^1_{\bL}\label{eq:directional}
\end{gather}
and
\begin{gather}
 \sum_{\vec{u}\in T_\xi\sP^1_{\bL}}s_{\vec{u}}(f)=d-\deg_\xi(f),\label{eq:surplus}
\end{gather} 
respectively.
\end{fact}

As an application of Theorem \ref{th:unicity},
we show the following.

\begin{mainth}[Improved degenerating limit theorem]\label{th:degenerating}
For every $f\in(\cO(\bD)[t^{-1}])(z)\subset\bL(z)$ of degree $d>1$ 
such that $f^{-1}(\xi_g)\neq\{\xi_g\}$,
letting $\mu_t=\mu_{f_t}$ be the unique maximal
entropy measure on $\bP^1(\bC)$ of $f_t$ for each $0<|t|\ll 1$, 
we have 
\begin{gather}
 \lim_{t\to 0}\mu_t
=\lim_{n\to\infty}\frac{1}{d^n}\sum_{\vec{v}\in T_{\xi_g}\sP^1_{\bL}}\bigl(\dep_{\vec{v}}(\widetilde{f^n})_{\xi_g}\bigr)\delta_{U(\vec{v})}
=(\pi_{\xi_g})_*\nu_f \label{eq:surpluslimit}
\end{gather}
weakly on $\bP^1(\bC)\cong T_{\xi_g}\sP^1_{\bL}$.
In particular, the weak limit
$\lim_{t\to 0}\mu_t$ is written as at most 
countably many atoms in $\bP^1(\bC)$.
\end{mainth}

Theorem \ref{th:degenerating} improves 
the degenerating limit assertion
$\lim_{t\to 0}\mu_t=(\pi_{\xi_g})_*\nu_f$ 
by DeMarco--Faber \cite[Theorem B]{DF14}\footnote{The proof in \cite{DF14} was based on a simplified statement \cite[Theorem 4.10]{DF14} (which is complemented in
\cite[Theorem A]{Okudegenerating}).} 
in that the first convergence assertion
in \eqref{eq:surpluslimit} is a new input
(which has been seen
in a purely complex analytic and
more general setting \cite[Propositions 4.3 and 4.6]{KN24}). Based on Theorem \ref{th:unicity}
(i.e., the second convergence assertion in \eqref{eq:surpluslimit}),
we give a much simpler and more natural proof 
of $\lim_{t\to 0}\mu_t=(\pi_{\xi_g})_*\nu_f$
than the previous one;
we dispense with an {\em a priori}
verification of purely atomicness of any weak limit point of $(\mu_t)_{0<|t|\ll 1}$ on $\bP^1(\bC)$,
and indeed obtain
the purely atomicness of the weak limit $\lim_{t\to 0}\mu_t$ as a consequence of \eqref{eq:surpluslimit}. As a byproduct of the proof,
a new equidistribution assertion is also 
obtained (see Proposition \ref{th:equidistr} 
below).

Recall that, when $K=\bL$, 
$E(f)$ consists of at most two points in $\bP^1(\bL)$, and replacing $f$ with $f^2$ if necessary,
we assume that $f^{-1}(a)=\{a\}$ 
for every $a\in E(f)$
without loss of generality. Moreover, we also have
$E(f)\subset \bP^1(\cO(\bD)[t^{-1}])$
replacing $t$ with some power
of it (i.e., by a base change of $\bL$)
and scaling $t$ complex affinely 
around the origin $t=0$ if necessary.

\begin{notation}
For any probability measure $m$ on $\bP^1(\bC)$,
let us denote the purely atomic part of $m$ by
\begin{gather*}
 m^*:=\sum_{w\in\bP^1(\bC)}m(\{w\})\delta_w.
\end{gather*}
\end{notation}

\begin{proof}[Proof of the first
convergence in \eqref{eq:surpluslimit}]
 Under the assumption that $f^{-1}(\xi_g)\neq\{\xi_g\}$,
 pick any weak limit point 
 \begin{gather*}
 \mu=\lim_{j\to\infty}\mu_{t_j}\quad\text{on }\bP^1(\bC) 
 \end{gather*}
 of $(\mu_t)_{0<|t|\ll 1}$ for some sequence $(t_j)$ in $\bD\setminus\{0\}$ tending to $0$ as $j\to\infty$.
For a notational simplicity, from now on, we write
\begin{align*}
 S_g&=S(\{\xi_g\})\quad\text{and}\\
S_n&=S(\{\xi_g,f^n(\xi_g)\})\quad\text{for every }n\in\bN. 
\end{align*}

If $f^n(\xi_g)=\xi_g$ for some $n\in\bN$, then 
we replace $f$ with $f^n$ without loss of 
generality. Then from the above
weak limit
$\mu$,
which satisfies a degenerated balanced property
(\cite[Theorem 2.4]{DF14}), 
a probability measure 
$\omega_\mu$ on 
$\bP^1/S_g$ is constructed so that for every
$\vec{v}\in T_{\xi_g}\sP^1_{\bL}$,
$\omega_\mu(U(\vec{v}))=\mu(\{\hat{a}\})$
writing $\vec{v}=\widevec{\xi_ga}$ by some 
$a\in\bP^1(\bL)$ (so $\hat{a}\in\bP^1(\bC)\cong T_{\xi_g}\sP^1_{\bL}$)
and that for every $n\in\bN$,
\begin{gather}
\mu^*\equiv
\frac{(f^n)_{\xi_g}^*\omega_\mu-(\deg_{\xi_g}(f^n))\cdot
 \omega_\mu\bigl(\{\{\xi_g\}\}\bigr)\delta_{\{\{\xi_g\}\}}}{d^n}\label{eq:redbalfixed}
\end{gather}
on $\bP^1(\bC)\cong(\sP^1_{\bL}/S_g)\setminus\{\{\xi_g\}\}$. Hence 
we are done by Theorem \ref{th:unicity}
since for {\em any} probability
measure $\omega$ on $\sP^1_{\bL}/S_n$, 
by \eqref{eq:redpull},
\begin{gather*}
 (f^n)_{\xi_g}^*\omega-\bigl(\deg_{\xi_g}(f^n)\bigr)\cdot
 \omega\bigl(\{\{f^n(\xi_g)\}\}\bigr)\delta_{\{\xi_g\}}
\ge \sum_{\vec{v}\in T_{\xi_g}\sP^1}\bigl(\dep_{\vec{v}}(\widetilde{f^n})_{\xi_g}\bigr)\delta_{U(\vec{v})}
\end{gather*}
on $\sP^1_{\bL}/S(\{\xi_g\})$.

From now on, suppose that $f^n(\xi_g)\neq\xi_g$
for any $n\in\bN$.
Then for every $n\in\bN$, there is $A_n\in\PGL(2,\cO(\bD)[t^{-1}])\subset\PGL(2,\bL)$,
which is unique up to a postcomposition by
$\PGL(2,\cO(\bD))$
(see \cite[Proof of Lemma 2.1]{DF14}),
such that
\begin{gather*}
 A_n^{-1}(\xi_g)=f^n(\xi_g). 
\end{gather*}
By a diagonal argument, taking a subsequence of $(t_j)$ if necessary, 
for every $n\in\bN$, there is also the weak limit
\begin{gather*}
  \mu^{(n)}:=\lim_{j\to\infty}(A_n)_*\mu_{t_j}
\quad\text{on }\bP^1(\bC).
\end{gather*}
For every $n\in\bN$,
from the pair $(\mu,\mu^{(n)})$,
which satisfies a degenerated balanced property
(\cite[Theorem 2.4]{DF14}), 
of those two probability measures on $\bP^1(\bC)$,
a probability measure $\omega_\mu^{(n)}$
on $\sP^1_{\bL}/S_n$ 
is constructed 
so that for every $\vec{v}\in(T_{\xi_g}\sP^1_{\bL})\setminus\{\widevec{\xi_gf^n(\xi_g)}\}$,
$\omega_\mu^{(n)}(U(\vec{v}))=\mu(\{\hat{a}\})$ 
writing $\vec{v}=\widevec{\xi_ga}$ by some 
$a\in\bP^1(\bL)$ (so $\hat{a}\in\bP^1(\bC)\cong T_{\xi_g}\sP^1_{\bL}$), 
that
for every $\vec{w}\in(T_{f^n(\xi_g)}\sP^1_{\bL})\setminus\{\widevec{f^n(\xi_g)\xi_g}\}$,
$\omega_\mu^{(n)}(U(\vec{w}))=\mu^{(n)}(\{\hat{b}\})$
writing $(A_n)_*\vec{w}=\widevec{\xi_gb}$
by some 
$b\in\bP^1(\bL)$ (so $\hat{b}\in\bP^1(\bC)\cong T_{\xi_g}\sP^1_{\bL}$), 
that 
\begin{align*}
 \omega_\mu^{(n)}\bigl(\{\{\xi_g\}\}\bigr)&=1-\mu^*(\bP^1(\bC))\quad\text{and}\\ 
\omega_\mu^{(n)}\bigl(\{\{f^n(\xi_g)\}\}\bigr)&=1-(\mu^{(n)})^*(\bP^1(\bC)),
\end{align*}
and that
\begin{gather}
\mu^*\equiv
\frac{(f^n)_{\xi_g}^*\omega_\mu^{(n)}-(\deg_{\xi_g}(f^n))\cdot
 \omega_\mu^{(n)}\bigl(\{\{f^n(\xi_g)\}\}\bigr)\delta_{\{\{\xi_g\}\}}}{d^n}\label{eq:reducedbalanced}
\end{gather}
on $\bP^1(\bC)\cong(\sP^1_{\bL}/S_g)\setminus\{\{\xi_g\}\}$
(for the constructions
of $\omega_\mu$ from $\mu$
in the previous case and
of $\omega_\mu^{(n)}$ from $(\mu,\mu^{(n)})$ here
respectively
and their reduced balanced properties
\eqref{eq:redbalfixed} and \eqref{eq:reducedbalanced}, see \cite[\S4]{Okudegenerating}, which is more direct than the {\em transfer principle} in \cite[Proposition 5.1(1)]{DF14}).

In the case of $\deg_{\xi_g}(f^n)=o(d^n)$
as $n\to\infty$, we are done 
by Theorem \ref{th:unicity} 
since for {\em any} probability
measures $\omega_n$ on $\sP^1_{\bL}/S_n$, 
$n\gg 1$,
by \eqref{eq:redpull} and \eqref{eq:directional},
\begin{multline*}
 (f^n)_{\xi_g}^*\omega_n-\bigl(\deg_{\xi_g}(f^n)\bigr)\cdot
 \omega_n\bigl(\{\{f^n(\xi_g)\}\}\bigr)\delta_{\{\xi_g\}}\\
\ge \sum_{\vec{v}\in T_{\xi_g}\sP^1}\bigl(\dep_{\vec{v}}(\widetilde{f^n})_{\xi_g}\bigr)\delta_{U(\vec{v})}
-O(\deg_{\xi_g}(f^n))\quad\text{as }n\to\infty
\end{multline*}
on $\sP^1_{\bL}/S(\{\xi_g\})$.

Finally, suppose that
$\deg_{\xi_g}(f^n)\neq o(d^n)$ as $n\to\infty$.
Under this assumption,
the sequence $(f^n(\xi_g))_{n\gg 1}$ 
is in the Berkovich maximal ramification locus 
\begin{gather*}
 \mathsf{R}_{\max}(f):=\{\xi\in\sP^1_{\bL}:\deg_\xi(f)=d\} 
\end{gather*}
of $f$.
Then there is $a_0\in E(f)(\subset\mathsf{R}_{\max}(f))$ such that 
$A_n^{-1}(\xi_g)=f^n(\xi_g)\to a_0$ 
as $n\to\infty$ 
and, moreover, that for $n\gg 1$, 
$f^{n+1}(\xi_g)\in[f^n(\xi_g),a_0]\setminus\{f^n(\xi_g),a_0\}\subset[\xi_g,a_0]$ (for more details, see \cite[P.\ 25]{DF14}); then the $A_n$ can be chosen so that $A_n(a_0)\equiv a_0$ and that
($A_n(\xi_g)\neq\xi_g$ and) we write
\begin{gather*}
 \widevec{\xi_gA_n(\xi_g)}=\widevec{\xi_gb_n}\neq\widevec{\xi_ga_0}
\end{gather*}
by some $b_n\in\bP^1(\bL)$
(indeed, in some projective coordinate,
$a_0=\infty$, $b_n\equiv 0$, and $A_n$ is a scaling around $0\in K$ \cite[\S 6(a)]{Okudegenerating}).

For $n\gg 1$,
by \eqref{eq:reducedbalanced},
\eqref{eq:redpull}, and \eqref{eq:directional},
we have
\begin{gather*}
\mu^*
\ge\frac{1}{d^n}\sum_{\vec{v}\in T_{\xi_g}\sP^1}\bigl(\dep_{\vec{v}}(\widetilde{f^n})_{\xi_g}\bigr)\delta_{U(\vec{v})}
-\omega_\mu^{(n)}\bigl(\{V\in S_n:V\not\subset U(\widevec{f^n(\xi_g)\xi_g})\}\bigr)
\times O(1)
\quad\text{as }n\to\infty
\end{gather*}
on $\sP^1_{\bL}/S(\{\xi_g\})$.
We claim that
\begin{gather}
\lim_{n\to\infty}\omega_\mu^{(n)}\bigl(\{V\in S_n:V\not\subset U(\widevec{f^n(\xi_g)\xi_g})\}\bigr)=0,
\end{gather}
which will conclude the proof 
by Theorem \ref{th:unicity} in this final case;
for, on one hand, recall that
$\supp(\mu^{(n)})\subset\bP^1(\bC)\setminus\{\widehat{a_0}\}$
(see \cite[\S 6, Claim 2]{Okudegenerating}),
which yields
\begin{gather*}
 \omega_\mu^{(n)}\bigl(\{U(\widevec{f^n(\xi_g)a_0})\}\bigr)=\mu^{(n)}(\{\widehat{a_0}\})\equiv 0
\end{gather*}
for $n\gg 1$.\footnote{The original argument \cite[Corollary 5.3 and Proof of Theorem B]{DF14} to see $\mu^{(n)}(\{\widehat{a_0}\})\equiv 0$ seems difficult to read. This part is complemented in \cite[Claim 2 in \S6]{Okudegenerating} by a careful comparison of the behaviour of $f$ near the exceptional point $a_0(=\infty$ in some projective coordinate) in the $t$-adic norm $|\cdot|_r$ on $\bL$ with that of each specialization $f_t$ near the specialization $(a_0)_t\equiv\infty\in\bP^1(\bC)$ in the Euclidean norm $|\cdot|$ on $\bC$ for $0<|t|\ll 1$. This argument is conceptually similar to hybrid analysis (see e.g.\ \cite{Favre16}). In \cite[\S 3, using Proposition 3.5]{KN24}, a geometric and purely complex analytic argument is also developed even for degenerating sequences of complex rational functions.}
On the other hand, for $n\gg 1$, we note that
\begin{gather*}
 \omega_\mu^{(n)}\bigl(\{V\in S_n:V\not\subset U(\widevec{f^n(\xi_g)a_0})\sqcup U(\widevec{f^n(\xi_g)\xi_g})\}\bigr)
=\mu^{(n)}\bigl(\bP^1(\bC)\setminus\{\widehat{a_0},\widehat{b_n}\}\bigr)(\ge 0),
\end{gather*}
that $\lim_{j\to\infty}(A_n^{-1})_{t_j}=\widehat{a_0}$ locally uniformly 
on $\bP^1(\bC)\setminus\{\widehat{b_n}\}$,
and moreover that
\begin{gather*}
 \lim_{j\to\infty}(A_{n+1}\circ A_n^{-1})_{t_j}=\widehat{b_{n+1}}
\end{gather*}
locally uniformly on 
$\bP^1(\bC)\setminus\{\widehat{a_0}\}$.
Fix any $m>N\gg 1$,
and fix any family $(K_n)_{N\le n\le m}$
where  for each $n$,
$K_n$ is a compact subset
in $\bP^1(\bC)\setminus\{\widehat{a_0},\widehat{b_n}\}$. Then for every $n\in\{N,N+1,\ldots,m-1\}$, 
we inductively have 
\begin{gather*}
 \biggl((A_{n+1})_{t_j}\Bigl(\bigcup_{\ell=N}^n(A_\ell^{-1})_{t_j}(K_\ell)\Bigr)\biggr)
\cap K_{n+1}=\emptyset 
\end{gather*}
for $j\gg 1$, 
so that for $j\gg 1$,
the compact subsets $(A_n^{-1})_{t_j}(K_n)$
in $\bP^1(\bC)$, $N\le n\le m$, are 
mutually disjoint 
(cf.\ the final argument in \cite[\S2.5, Proof of Theorem A]{DF14}). Then
using the outer regularity of $\mu^{(n)}$, 
we compute
\begin{align*}
 1\ge\liminf_{j\to\infty}\sum_{N\le n\le m}
\mu_{t_j}((A_n^{-1})_{t_j}(K_n))
=\sum_{N\le n\le m}\liminf_{j\to\infty}
\bigl(((A_n)_{t_j})_*\mu_{t_j}\bigr)(K_n)
\ge\sum_{N\le n\le m}\mu^{(n)}(K_n),
\end{align*}
which in turn, also by the inner regularity of $\mu^{(n)}$, yields
\begin{gather*}
\sum_{n\ge N}\mu^{(n)}\bigl(\bP^1(\bC)\setminus\{\widehat{a_0},\widehat{b_n}\}\bigr)
=\limsup_{m\to\infty}\sum_{N\le n\le m}\mu^{(n)}\bigl(\bP^1(\bC)\setminus\{\widehat{a_0},\widehat{b_n}\}\bigr)\le 1<\infty.
\end{gather*}
Hence the claim holds.
\end{proof}

For a future reference, we conclude with
the following proposition, which is a byproduct of the arguments in the first two cases in the above proof.

\begin{proposition}\label{th:equidistr}
For every rational function 
$\phi\in K(z)$ of degree $d>1$,
if $\deg_{\xi_g}(\phi^n)=o(d^n)$ as $n\to\infty$, then for any  probability
measures $\omega_n$ on $\sP^1/S(\{\xi_g,\phi^n(\xi_g)\})$, $n\gg 1$,
we have the weak convergence
\begin{gather*}
\lim_{n\to\infty}\frac{(\phi^n)^*_{\xi_g}\omega_n}{d^n}
=(\pi_{\xi_g})_*\nu_\phi\quad\text{on }\sP^1/S(\{\xi_g\}).
\end{gather*}
\end{proposition}

\begin{acknowledgement}
 This research was partially supported by JSPS Grant-in-Aid for Scientific Research (C), 23K03129.
\end{acknowledgement}

\end{document}